 %%%%%%%%%%   Revised   Nonuniform Conjugacy Paper

\documentclass[11pt]{amsart}
\usepackage{latexsym}
\usepackage{amsfonts,graphics}

\usepackage{amsmath,amsthm,amssymb}
\newtheorem{definition}{Definition}
\newtheorem{question}{Question}
\newtheorem{proposition}{Proposition}[section]
\newtheorem{theorem}[proposition]{Theorem}

\newtheorem{corollary}{Corollary}

\newtheorem{thm}{Theorem}[section]
\newtheorem{lemma}[thm]{Lemma}

\theoremstyle{definition}
\newtheorem{defn}[thm]{Definition}

\newcommand{\comment}[1]{}
\newcommand{\NP}{\mathcal{NP}}
\begin{document}

\title[Computational complexity and  the conjugacy problem]{Computational complexity  and  the conjugacy problem}

\date{\today}

\author[Alexei ~Miasnikov]{Alexei Miasnikov}

       \address{\tt Department of Mathematics, Stevens Institute of Technology,
Hoboken, NJ 07030, USA}
\email{amiasnik@stevens.edu   }

\author[Paul ~Schupp]{Paul Schupp}

\address{\tt Department of Mathematics, University of Illinois at
       Urbana-Champaign, 1409 West Green Street, Urbana, IL 61801, USA}
       \email{schupp@math.uiuc.edu}

\subjclass[2000]{ Primary 20F36,  Secondary 20E36, 57M05}

\begin{abstract}
   The conjugacy problem for a finitely generated group $G$ is 
the  two-variable problem of deciding for an arbitrary pair $(u,v)$ of
elements of $G$, whether or not $u$ is conjugate to $v$ in $G$.  We construct 
examples of finitely generated, computably presented groups such that
for every  element $u_0$ of $G$, the problem of deciding if an
arbitrary element is conjugate to $u_0$ is decidable in quadratic time
but the worst-case complexity of the global conjugacy problem is arbitrary: it can be
any c.e. Turing degree , can exactly mirror the Time Hierarchy Theorem, or can be $\mathcal{NP}$-complete. 
Our groups also have the property that the conjugacy problem is generically linear time:
  that is,  there is a linear time partial algorithm for the conjugacy problem whose domain has density $1$,
so hard instances are very rare.  
We also consider the complexity relationship of the ``half-conjugacy'' problem to the conjugacy problem. 
In the last section we discuss the extreme opposite situation: groups with algorithmically finite conjugation.
\end{abstract}

\maketitle

\tableofcontents

\bigskip

 \section{Introduction}\label{intro}
\medskip

   The word problem is a special case of the conjugacy problem since an element  in a 
 group $G$ is equal to the identity if and only if it is conjugate 
to  the identity.  The  {\em individual conjugacy problem}, ICP($u_0$), for  a fixed element $u_0$ of a group $G$
is the problem of deciding whether or not  arbitrary elements $v$ of $G$ are conjugate to $u_0$.
The complexity of an individual conjugacy problem  depends
 on the fixed element $u_0$, and its complexity may be much less  than that of the 
conjugacy problem in $G$.  The word problem for semigroups resembles the conjugacy
problem for groups in that it is a two variable problem since an equation $u = v$  cannot generally
be reduced to an equation with one fixed side.  In 1965, J.C. Shepherdson \cite{Shepherdson} constructed
 finitely presented semigroups in which each individual problem $u_0 = v ?$,  with   $u_0$ fixed and  $v$ arbitary,   is  decidable
but the global word problem is undecidable of arbitrary c.e. degree.  Shortly thereafter Donald Collins \cite{Collins1} constructed 
finitely presented groups in which each individual conjugacy problem was decidable but the groups had undeciable conjuacy problem
of arbitrary c.e. degree.  Indeed,  both papers show that if one has a uniformly computable set $\boldsymbol{d}_i$ of c.e. degrees then
the complexity of each individual problem problem is bounded by  the join of a finite number of the $\boldsymbol{d}_i$ while the
global two-variable problem can be of any c.e. degree $\boldsymbol{d}$ greater than or equal to any of the $\boldsymbol{d}_i$.

  The constructions of Collins used HNN extensions.
  Although HNN extensions may generally have complicated conjugacy problems,  the 
time complexity of many individual conjugacy problems may be quite low. Miller  \cite{Miller2} 
 constructed a group $K$ which is an HNN extension of a free group of finite rank 
with finitely generated associated subgroups in which the conjugacy problem is undecidable. 
However, it was shown  in \cite{BMR3} that the individual conjugacy problem for a 
generic element $w_0 \in K$ is decidable in at most cubic time.   See
 \cite{BMR1,BMR2} for other examples of HNN extensions and free products with amalgamation
where the the complexity of the individual conjugacy problems of  many elements is low.

  In this article we construct finitely generated, computably presented groups where  \emph{all}  individual conjugacy problems are 
decidable in quadratic time but the global  conjugacy problem can have arbitrary worst-case complexity.
complexity. (Note in particular that  the word problem is decidable in quadratic time.)
We  discuss this    from the viewpoints  of the general theory of computability,
   the Time Hierarchy Theorem  and $\NP$-completeness.  There is also now a general awareness that many  decision problems are
generically easy and this phenomenon for decision problems in group theory was investigated in detail in \cite{KMSS}.  
Although having arbitrary worst-case complexity,
the conjugacy problem in  the groups we construct will  have linear time generic-case complexity.  This means that there is a  linear time partial algorithm  for the conjugacy problem whose domain has density $1$.

   The paper \cite{CDKV}  raised the  fascinating question of
the ``half-conjugacy problem''.   Suppose that we have a finitely generated group
$G$ with an algorithm which decides, given an arbitrary pair $(u,v)$ of 
elements of $G$, whether or not $u$ is conjugate to one of $v$ or $v^{-1}$:
Must $G$ have solvable conjugacy problem?  One supposes that the answer is 
``No'',  but the question seems  very subtle.  We do not answer the basic  question
but we show that for every  computable  function 
$f: \mathbb{N} \to \{0,1\}$,
there is a group $P$ with solvable conjugacy problem  in  which 
the half-conjugacy problem is decidable in quadratic time while  the conjugacy problem 
 has time complexity greater than $f$.  The group $P$   also satisfies  the above constraints on individual conjugacy
problems and generic-case complexity.

    Finally, in the last section we discuss the extreme opposite situation where 
complexity is the worst possible.  A finitely generated group  $G$ with a computably enumerable set of defining relators is
 \emph{algorithmically finite}  if every infinite computably enumerable subset has two distinct words which define elements
equal in $G$.  
Miasnikov and Osin \cite{MO} showed how to use the Golod-Shafarevich inequality to construct such groups.
We say that a finitely generated group $G$ has \emph{algorithmically finite conjugation} if $G$ has infinitely many conjugacy classes and every 
infinite computablely enumerable set of elements of $G$ must contain two elements which are conjugate.  We show that algorithmically
finite groups have algorithmically finite conjugation.

    We obtain  very precise control over complexity by using non-metric small cancellation theory so
 we first review the condition which which use.  This condition  ensures  that the structure  of conjugacy diagrams in the 
groups  we construct is  very simple, thus proving the desired results.
 We note that our groups require using an infinite number of relators since any  group with a finite presentation
satisfying our  condition  has  global conjugacy problem decidable in   quadratic time.
We then review  each  desired  complexity
condition and discuss the corresponding groups in  separate sections.  Howevery, there is   really only one basic construction
and the different cases require only small adjustments in the defining relators.

    We thank the referee for very helpful suggestions.

\section{Non-metic small cancellation theory}

   We construct the desired groups using  small cancellation theory.
For this we need  results developed in  Chapter V of Lyndon and Schupp~\cite{LS}
but we recall some essential definitions and details here.

\begin{definition} Let $ F = \langle X \rangle$ be a finitely generated free group.
A subset $R$ of $F$ is \emph{symmetrized} if all elements of $R$ are cyclically
reduced and, for  each $r \in R$,  all cyclic permutations of both $r$ and $r^{-1}$ are
also in $R$. We write $r \equiv bc$ if  $b$ is an initial segment of $r$ and $c$ consists
of the remaining letters of $r$, so $bc$ is reduced without cancellation.
 If $r_1 \ne r_2$ are distinct elements of $R$ with  $r_1 \equiv bc_1$ and
 $r_2 \equiv bc_2$ where $b$ is nonempty,  then $b$ is called a \emph{piece} relative to $R$.
\end{definition}

The basic non-metric small cancellation condition is

\[ \text{Condition} \ C(p):   \text{ ~ No element of } R \text{~ is a product of fewer than } p \text{~ pieces}. \] 

   The sets of defining relators which we construct will satisfy the condition $C(20)$.
Even though we will  not have a metric condition on the lengths of pieces, 
we need a good notion of \emph{reduction}.

\begin{definition} Fix a symmetrized subset $R$ of the free group $ F = \langle X \rangle$.
We assume that all generators are pieces.
If $w$ is any cyclically reduced word, consider the factorization $w \equiv b_1b_2 \dots b_l$
of $w$ into maximal pieces.  That is, each $b_i$ is a piece and, if $b_i$ is not a suffix of $w$
then $b_iy$ is not a piece where $y$ is the letter following $b_i$ in $w$.
The integer   $l$  is the \emph{piece length} of $w$ with respect to $R$.  
We denote the piece length of $w$ by $||w||_R$. 
\end{definition}

   The following lemma is immediate.

\begin{lemma} Let $R$ be a symmetrized subset of $F$.  If $w$  is a cyclically reduced word and $w'$
is a cyclic permutation of $w$ then  $||w||_R$ and  $||w'||_R$ differ by at most $1$.
\end{lemma}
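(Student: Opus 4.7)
The plan is to produce, for any cyclic permutation $w'$ of $w$, an explicit piece-factorization of $w'$ of length at most $||w||_R + 1$, and then to appeal to the fact that the greedy ``maximal piece'' factorization realizes the minimum number of pieces among all piece-factorizations of a word.

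The key auxiliary observation, which uses the symmetrized hypothesis on $R$ in an essential way, is that every nonempty prefix or suffix of a piece is again a piece. If $b \equiv x_1 \cdots x_m$ is a piece witnessed by distinct elements $r_1 \equiv b c_1$ and $r_2 \equiv b c_2$ in $R$, then since $R$ is closed under cyclic permutation the words $x_2 \cdots x_m c_j x_1$ both lie in $R$ (for $j = 1, 2$); they are still distinct and share $x_2 \cdots x_m$ as a common initial segment, so the suffix $x_2 \cdots x_m$ is a piece. Iterating yields every suffix, and prefixes are immediate from the definition.

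Writing $w \equiv b_1 b_2 \cdots b_l$ for the maximal factorization with $l = ||w||_R$, the cyclic rotation taking $w$ to $w'$ cuts the cyclic word inside a single factor $b_i \equiv uv$ (with $u$ or $v$ possibly empty), so that
\[
w' \equiv v\, b_{i+1} \cdots b_l\, b_1 \cdots b_{i-1}\, u.
\]
By the observation each factor here is a piece, and discarding any empty ones yields a piece-factorization of $w'$ of length at most $l+1$. It therefore suffices to verify, by induction on word length, that greedy factorization is minimal: given any piece-factorization $y = c_1 \cdots c_k$, the greedy first piece $b_1'$ is at least as long as $c_1$, so either $b_1'$ ends at a $c_j$-boundary (in which case $c_1,\ldots,c_j$ collapses to the single piece $b_1'$), or $b_1'$ ends strictly inside some $c_j$ with $j \geq 2$ (in which case $c_1,\ldots,c_j$ is replaced by $b_1'$ together with the remaining suffix of $c_j$, itself a piece by the observation). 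In either case the count does not increase, and induction on the strictly shorter remainder yields $||w'||_R \leq l+1$. The reverse inequality follows by swapping the roles of $w$ and $w'$.

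The main obstacle is the prefix/suffix-of-a-piece observation, which fails without the symmetrized hypothesis on $R$ and so must be where that hypothesis enters the argument; everything else is combinatorial bookkeeping about greedy factorizations.
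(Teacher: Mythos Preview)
Your argument is correct. The paper itself offers no proof of this lemma, stating only that it ``is immediate,'' so there is nothing to compare against at the level of strategy; your write-up supplies precisely the two ingredients that justify the word ``immediate'': (i) nonempty prefixes and suffixes of pieces are pieces, the suffix case using the closure of $R$ under cyclic permutation, and (ii) the greedy maximal-piece factorization attains the minimum length over all piece-factorizations. Both claims are proved cleanly, and the conclusion $\bigl|\,||w||_R - ||w'||_R\,\bigr| \le 1$ follows by the symmetry you note.

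One small remark on presentation: in your verification that the cyclically permuted witnesses $x_2\cdots x_m c_j x_1$ remain distinct elements of $R$, you are implicitly using that $r_1, r_2$ are cyclically reduced (so the rotated words are reduced as written) and that $c_1 \ne c_2$; both are clear but worth a word, since a reader may pause there. Also note that the paper's definition of piece length explicitly assumes all generators are pieces, so the edge case where $u$ or $v$ has length~$1$ is automatically covered even before invoking your prefix/suffix observation.
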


 We need a notion of reduction tailored to  the sets of relators which we will use.  

\begin{defn}  A word $w$ \emph{contains an element of $R$ with at most $k$ pieces missing} 
if $w \equiv usv$ and there is an element $r \in R$ with $r \equiv st$ where $||t||_R \le k$.  
A word $w$ is \emph{weakly cyclically $R$-reduced}  if w is cyclically
reduced in the free group $F$,  $w$ does not begin and end with powers of the
same generator unless $w$ is simply a power of a generator, and no cyclic permutation of $w$  
contains a relator from $R$ with at most $7$ pieces missing.
\end{defn}

   The basic result of small cancellation theory is that if $G = \langle X; R \rangle$ where $R$
satisfies the condition $C(p)$ with $p \ge 6$ then every nontrivial word $w$ which is equal to
the identity in $G$ contains a subword which is  an element of $R$ with at most $3$ pieces missing.
(If $R$ satisfies the metric condition $C'(\frac{1}{6})$, so that the length of any piece is less
than $\frac{1}{6}$ of the length of any element of $R$ in which it occurs,  we then have Dehn's Algorithm.)

    \section{Groups with global conjugacy problem of arbitrary c.e. degree}
    
  A set $A$ of positive integers is \emph{computably enumerable}, written c.e., 
if there is a Turing machine $M$ which enumerates all the elements of $A$.
The basic relation between computably enumerable sets is that of
Turing reducibility.   A  set $A$ is \emph{Turing reducible} to a  set $B$,
 denoted $A \le_T B$, if there is an oracle Turing machine $M^B$ with an oracle for $B$
 which computes $A$.  Two sets $A$ and $B$ are \emph{Turing equivalent} if
 $A \le_T B$ and  $B \le_T A$.  Turing equivalence is indeed an equivalence relation
and  equivalence classes are called \emph{Turing degrees}.  A Turing degree 
$\boldsymbol{d}$ is \emph{c.e.} if it contains a c.e. set.  
(Not all sets in a nonzero c.e. degree are themselves
c.e.  since a set is always Turing equivalent to its complement.) The c.e. Turing degrees are
partially ordered by Turing reducibility and it is an important fact
that there infinitely many distinct c.e. Turing degrees.

  Traditionally, a \emph{recursive presentation} of a group
is a group presentation $\langle X; R \rangle$ where the set $X$ of generators
is finite and the set $R$ of defining relators is computably enumerable.
 A  \emph{computable presentation} of a group 
is a group presentation $\langle X; R \rangle$ where the set $X$ of generators
is finite and the set $R$ of defining relators is computable.   We use these terms
as distinguishing two different classes of presentations. If a group $G$ has
a recursive presentation it also has a computable presentation, but this requires
changing the presentation.

   It was shown in the early 1970's
that  the word and conjugacy problems for recursively  presented, indeed,  finitely presented,
groups mirror all possible  relations of Turing reducibility between c.e. Turing degrees:   
The c.e. Turing degrees $\boldsymbol{d}_1$  and $\boldsymbol{d}_2$ satisfy   
$\boldsymbol{d}_1 \le_T \boldsymbol{d}_2$
 if and only if there is a recursively   presented ( finitely presented) group $G$ with 
word problem of Turing degree $\boldsymbol{d}_1$  
and conjugacy problem of Turing degree $\boldsymbol{d}_2$.  The result for recursive 
presentations is in Miller \cite{Miller1} and the stronger version for finite presentations 
is due to Collins \cite{Collins2}.

  We will show

\begin{thm} \label{th:Turing} 
For every  c.e. Turing degree $\bf d$  there is computably presented group $G$
such that  for every fixed element $u_0$ of $G$, the problem of deciding if an
arbitrary element is conjugate to $u_0$ is decidable in quadratic time but the conjugacy
problem of $G$ has degree $\bf d$ but  linear time generic-case complexity.
\end{thm}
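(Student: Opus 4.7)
The plan is to realize $G$ as a $C(20)$ small cancellation group whose symmetrized defining set $R$ encodes the given degree $\mathbf{d}$. Fix a c.e.\ set $A \subseteq \mathbb{N}$ with $A \in \mathbf{d}$, work over a small finite alphabet $X$, and for each $n$ manufacture a long, cyclically reduced candidate relator
\[ r_n := u_n\, w_n\, v_n^{-1}\, w_n^{-1} \]
out of three rapidly growing, structurally rigid blocks $u_n, v_n, w_n$ in $F(X)$ (for instance based on patterns such as $a b^{f(n)} a b^{f(n)+1} \cdots$ with $f$ growing fast enough that long pieces identify $n$ uniquely). Taking $R$ to be the symmetrization of $\{r_n : n \in A\}$ gives a $C(20)$ set of relators, c.e.\ in $A$; a standard re-enumeration trick produces an actually computable set of relators defining the same group, giving a computable presentation of $G := \langle X ; R \rangle$.

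For the Turing degree claim I would show $\mathrm{CP}(G) \equiv_T A$. The reduction $A \le_T \mathrm{CP}(G)$ is made using the test pairs: by the $C(20)$ conjugacy analysis below, $u_n$ and $v_n$ are conjugate in $G$ if and only if (a symmetrization of) $r_n$ already lies in $R$, if and only if $n \in A$. Conversely, with oracle $A$ the set $R$ is computable, and the conjugacy problem in any computably presented $C(6)$ group is decidable via the bounded structure of annular diagrams, giving $\mathrm{CP}(G) \le_T A$.

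The individual conjugacy problem for a fixed $u_0$ rests on a structural lemma for $C(20)$ groups: if $u_0$ and $v$ are weakly cyclically $R$-reduced and conjugate in $G$, then either $v$ is a cyclic conjugate of $u_0$ in $F$, or their conjugacy is witnessed by an annular diagram of bounded size whose boundary relators must each share a long piece with a cyclic rotation of $u_0$. Since $u_0$ is fixed and each $r_n$ is built from large, rigid blocks, only finitely many $r_n$ can share such a long piece with $u_0$; call this finite set $R_0$. The algorithm for $\mathrm{ICP}(u_0)$ hard-codes the finite restriction $R_0 \cap R$, weakly cyclically reduces its input $v$ using only $R_0$ together with free reduction in time $O(|v|^2)$, then checks cyclic $F$-conjugacy to the reduced $u_0$ and consults the finite list of candidate diagrammatic witnesses. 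The total time is $O(|v|^2)$. For generic-case linear complexity, asymptotic density-$1$ many cyclically reduced pairs $(u,v)$ contain no long piece of any $r_m$, hence are automatically weakly cyclically $R$-reduced, and their conjugacy question is decided by the linear-time cyclic $F$-conjugacy test; the partial algorithm outputs the answer on these inputs and ``don't know'' otherwise.

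The main obstacle will be the structural lemma invoked for $\mathrm{ICP}(u_0)$: proving that in a $C(20)$ group, conjugacy between two weakly cyclically $R$-reduced words is either visible as $F$-conjugacy or is forced by a small annular diagram whose relators must each have a long common piece with one of the given words. This requires a Greendlinger-style analysis of annular van Kampen diagrams in the non-metric $C(20)$ regime, ruling out long thin meandering annuli and bounding diagrammatic complexity in terms of the inputs, in the spirit of Lyndon--Schupp Chapter V. Secondary obstacles are the routine but careful bookkeeping needed to verify the $C(20)$ condition on the symmetrization of $\{r_n\}$, and to guarantee sharpness of the equivalence $u_n \sim_G v_n \iff n \in A$ against all possible annular diagrams, not only the obvious one built from $r_n$.
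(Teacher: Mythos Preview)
Your overall architecture matches the paper's: encode a c.e.\ set $A \in \mathbf{d}$ into a $C(20)$ small-cancellation presentation, use the Lyndon--Schupp structure theorem for annular diagrams, and read off both directions of $\mathrm{CP}(G) \equiv_T A$ from the test pair. But there is a real gap in your $\mathrm{ICP}(u_0)$ algorithm.

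Your structural lemma requires \emph{both} $u_0$ and $v$ to be weakly cyclically $R$-reduced; you then propose to achieve this for $v$ by reducing only with the finite set $R_0$ of relators that share a long piece with $u_0$. That does not in general produce a weakly cyclically $R$-reduced word. Concretely, if $r_m \in R \setminus R_0$ and the input is the freely reduced word $v := r_m \cdot v_1$ with $u_0 \sim_G v_1$ witnessed by $R_0$, then $v \sim_G u_0$, yet your $R_0$-reduction leaves $v$ unchanged and there is no $R_0$-only annular diagram between $u_0$ and $v$ (indeed $u_0 \not\sim v$ in $\langle X; R_0\rangle$, since that would force $r_m = 1$ there). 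So your algorithm outputs the wrong answer. The difficulty is not the annular diagram between reduced words---you are right that only $R_0$ appears there---but the preprocessing step of reducing an \emph{arbitrary} $v$, which may involve any relator in $R$, and for which you have no time bound.

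The paper solves exactly this problem with a padding trick you are missing. Rather than indexing relators by $n \in A$, it fixes a computable bijection $f:\mathbb{N}^+ \to A$ with running time $t_i$ on input $i$, and builds the $i$-th relator to contain conjugating blocks $c_1^{i} d_1^{t_i} c_2^{i} d_2^{t_i} c_3^{i} d_3^{t_i}$. Two things follow. First, the relator set is indexed by all of $\mathbb{N}^+$ and is computable outright, with no re-enumeration needed. Second, and crucially, any subword of a relator with at most $7$ pieces missing must contain a ``critical'' block displaying both $i$ and $t_i$; one then runs the machine for exactly $t_i$ steps on input $i$ and verifies in time linear in the subword whether it is genuinely part of a relator. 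This yields a quadratic-time procedure (the paper's Reduction Lemma) that weakly cyclically $R$-reduces \emph{any} input $v$, after which the annular-diagram analysis, now legitimately applied, gives $\mathrm{ICP}(u_0)$ in quadratic time along the lines you sketched. Your re-enumeration trick does not supply such a linear-time local relator-membership test, so this step fails for your presentation.

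A smaller point: your generic-case argument (density-$1$ many words contain no long relator piece) is presentation-dependent and needs a careful density computation. The paper instead observes that the $c$- and $d$-generators vanish in the abelianization, so $G^{\mathrm{ab}}$ contains $\mathbb{Z}^6$, and then invokes Theorem~C of \cite{KMSS} to get linear-time generic conjugacy for every finite generating set of $G$.
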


  Alexander Ol'shanskii and Mark Sapir \cite{OS} proved the  deep theorem that any 
finitely generated group with solvable conjugacy problem can be embedded in  a finitely
presented group with solvable conjugacy problem, establishing a direct conjugacy analog of the
Higman Embedding Theorem. Such a result depends on Ol'shanskii and Sapir's development 
of the theory of S-machines.  It seems plausible  that a very detailed analysis of the 
definitions and lemmas in their monograph applied to our groups would yield finitely 
presented groups where the complexity of all the individual conjugacy problems is bounded
 by a fixed tower of exponentials while the global conjugacy problem is undecidable, 
but this would need to be carefully verified.

    Since we want to discuss computational complexity, we need to be precise about how the lengths
of inputs are measured.  The standard length function for free groups is essentially a unary notation
since it requires reduced words to be completely written out.  Thus the length of $a^i$ is $i$.
We therefore use a  unary notation for positive integers, representing $n$ by a repetition of $n$ identical 
symbols.  It is a basic fact that if $A$ is any infinite c.e. set, there is a \emph{computable bijection}  
$f:\mathbb{N}^+ \to A$.  Given an infinite c.e. set $A$ not containing $0$,
fix such an $f$ and let $M$ be a Turing machine which, 
on input $i$ written in unary, computes $j = f(i)$ in unary. 
Let $t_i$ be the number of steps used by $M$ in computing $ f(i)$.  
Let $F = \langle a_1, \dots, a_{20},b_1, \dots, b_{20}, c_1, c_2, c_3, d_1,d_2,d_3 \rangle$.
The defining relators for the group $G$ for $A$ will be the symmetrized closure $R$ of the 
the  set $\{ r_i : i \in \mathbb{N}^+ \}$ of relators where 

\[   r_i = {a_1}^j \dots {a_{20}}^j {c_1}^i {d_1}^{t_i} {c_2}^i {d_2}^{t_i} {c_3}^i{d_3}^{t_i}   {b_{20}}^{-j} \dots {b_1}^{-j} {d_3}^{-t_i} {c_3}^{-i} {d_2}^{-t_i} {c_2}^{-i} {d_1}^{-t_i} {c_1}^{-i} , j = f(i)  \]
  
    Given the defining relators $R$, it will be immediate from small cancellation theory that 

\[  {a_1}^j \dots {a_{20}}^j   \thicksim  {b_{1}}^{j} \dots {b_{20}}^{j}  
   \iff  j \in A. \]
 
\noindent     and thus  the conjugacy problem in $G$ has  the same Turing degree as  $A$.
After discussing the geometry of conjugacy diagrams it will be clear that this is essentially
the only difficult case,  establishing the desired results.

     We first verify that $R$ satisfies the piece condition  $C(20)$.  The \emph{conjugating subwords}
of the relator $r_i$ are  the subword
$${c_1}^i {d_1}^{t_i} {c_2}^i {d_2}^{t_i} {c_3}^i{d_3}^{t_i}$$ 
and its inverse.  A conjugating part of a relator is of course a piece.
Note that for any full power of a generator $a_k$ or $b_k$ or $c_l$ occurring in an $r_i$, say  ${a_k}^j$,  
that power flanked  on \emph{both}  sides by  occurrences of the
neighboring generators, say $a_{k-1}{a_k}^j a_{k+1}$,  is \emph{not} a piece.  Since the function $f$ 
is one-to-one, ${a_k}^j$ and ${c_l}^{i}$ occur only in the relator $r_i$ where $ f(i) = j $.  
On the other hand,  the subword consisting of two successive powers, say ${a_k}^j  {a_{k+1}}^j$,
 is a piece because  the set $A$ is infinite and  there  are  $i' > i, j' > j$ with $f(i') =j' \in A$ 
so   ${a_k}^{j'}  {a_{k+1}}^{j'}$ occurs in the relator  $r_{i'}$.
  Since  the time $t_i$ 
required to compute $f(i)$ and  the time $t_{i'}$ required to compute $f(i')$  may be the same 
for different values of $i$ and $i'$, the three syllables at either end of the conjugating part,
say  ${c_3}^i {d_3}^{t_i} {b_{20}}^{-j}$,  may occur in different relators.

     The relators have  been  chosen so that the following lemma
holds.  Indeed, it will hold for the set of relators of all the  groups which  we construct.

\begin{lemma}[Reduction Lemma]
\label{l: reduction}
  There is an algorithm which, given an arbitrary word $w$,
 calculates  a weakly cyclically
$R$-reduced conjugate of $w$ in quadratic time.
\end{lemma}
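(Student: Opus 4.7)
The plan is to process $w$ in four passes. First, freely reduce $w$ by a stack scan in $O(|w|)$ time. Second, strip matching inverse pairs from the two ends until $w$ is cyclically reduced in $F$, again $O(|w|)$. Third, if $w$ then begins with $x^a$ and ends with $x^b$ for the same generator $x$ and is not itself a power of $x$, cyclically rotate so that the two $x$-syllables merge into one; this is linear and ensures the first two clauses of the definition of weakly cyclically $R$-reduced hold.

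The substantive work lies in the fourth pass, an iterative reduction loop: locate a cyclic subword of $w$ that contains some $r \in R$ with at most $7$ pieces missing, rewrite the present portion as the shorter complementary inverse using the relation $r = 1$, and restart from the first pass. To locate such a candidate in linear time I exploit the rigid syllable structure of the $r_i$. Each $r_i$ consists of four syllable blocks, namely $a_1^{f(i)}\cdots a_{20}^{f(i)}$, the conjugator $c_1^i d_1^{t_i} c_2^i d_2^{t_i} c_3^i d_3^{t_i}$, $b_{20}^{-f(i)}\cdots b_1^{-f(i)}$, and the inverse of the conjugator, for a total of $52$ syllables. A piece analysis as in the preceding discussion shows that each piece occupies at most a couple of syllables, so if $r_i \equiv st$ with $||t||_R \le 7$ then $t$ spans only a short contiguous arc of at most roughly $14$ syllables. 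A case analysis on where that arc can sit (it cannot simultaneously cover both $6$-syllable conjugator blocks, which are separated by a full $b$-block) then shows that in every case at least one of the two $a$- or $b$-blocks contributes a consecutive surviving run of at least six equal-exponent syllables to $s$, while at least one of the two conjugator blocks retains enough $c$- and $d$-syllables to recover both $i$ and $t_i$ directly from their exponents.

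A single left-to-right scan of the doubled word $ww$ detects every such equal-exponent run in time $O(|w|)$; the run yields the candidate value $j = f(i)$, and the surviving adjacent conjugator syllables supply candidate values of $i$ and $t_i$. I verify the candidate by simulating the fixed Turing machine $M$ on input $i$ for $t_i$ steps and checking that its output is $j$ in unary; since the unary representation of $t_i$ already sits inside $w$, this simulation takes $O(|w|)$. If the candidate verifies I perform the substitution and restart; otherwise I move on in the scan. Each iteration therefore costs $O(|w|)$, and each successful substitution strictly shortens $w$: a short calculation shows that, even in the worst case where the $\le 7$ missing pieces together contain one full conjugator and one $a$-syllable (total $3i + 3t_i + f(i)$ letters), the surviving $\ge 45$ pieces contribute $39 f(i) + 3i + 3 t_i$ letters, so the length drops by at least $38 f(i) > 0$. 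Hence the loop runs at most $O(|w|)$ times and the entire procedure is $O(|w|^2)$. The main obstacle is the linear-time pattern-matching step: I must argue that a surviving consecutive run of at least six equal-exponent $a$- or $b$-syllables, together with any surviving conjugator syllable, uniquely pins down the parameters $(j, i, t_i)$ of a single candidate $r_i$, so that no enumeration of $R$ and no inversion of $f$ are needed and the scan plus TM verification both fit within the $O(|w|)$ budget per iteration.
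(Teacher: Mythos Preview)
Your proposal is correct and follows essentially the same strategy as the paper: cyclically reduce, then repeatedly detect and shorten large relator portions using the rigid syllable structure and a bounded Turing-machine simulation, with termination by length decrease. The paper streamlines your detection step by observing that any subword of a relator missing at most $7$ pieces must contain one of two fixed ``critical subwords'' (e.g.\ $a_{20}^{j} c_1^{i} d_1^{t_i} c_2^{i} d_2^{t_i} c_3^{i} d_3^{t_i} b_{20}^{-j}$), since the two critical subwords are separated by $9$ pieces; this makes your case analysis unnecessary and also sidesteps your slightly inaccurate estimate that every piece spans at most two syllables (the full conjugating part is itself a single piece of six syllables, so $t$ can occupy more than $14$ syllables --- though your conclusion still holds).
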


\begin{proof}   First, in linear
time we calculate a cyclically reduced conjugate $w'$ in the free group
 which  does not begin and end with powers of the same generator,
unless it is simply  a power of a generator.  In the latter case, it
weakly  cyclically $R$-reduced.   The point of the form of the relators
is that a relator with at most $7$ pieces missing must contain
a \emph{critical subword} of either the form
 
\[    {a_{20}}^j {c_1}^i {d_1}^{t_i}{c_2}^i {d_2}^{t_i}{c_3}^i {d_3}^{t_i}{b_{20}}^{-j} \]
  
or the form

\[  {b_{1}}^{-j} {d_3}^{-t_i}{c_3}^{-i}  {d_2}^{-t_i}{c_2}^{-i}  {d_1}^{-t_i} {c_1}^{-i} {a_{1}}^{j}\]
  or their inverses.  This is because there are $9$ pieces between the occurrences of 
critical subwords in the defining relators.
On seeing such a subword in $w'$, run the Turing machine $M$ which calculates
$f$ for $t_i$ steps on input $i$ and see if $M$ calculates $j$ in exactly
that number of steps.  
If so, we indeed have part of a conjugate of the relator $r_i$ or its inverse. 
Next, check that   the part of $w'$ containing the above subword has the correct form,
that is, generators are in the correct order and have the appropriate powers $j$ or $-j$.
If we expand the occurrence to a subword $s$ which is part of a relator 
$r \equiv st$ and $t$ is a product of  at most $7$ pieces, replace $s$ by
$t^{-1}$ and freely cyclically reduce the resulting word to obtain $w''$.
 Note that  the piece length of $w''$ is at least $12$ less than the piece
length of $w$.  Repeat until we do not find any critical  subwords.
The proof for the other  sets of relators we construct   will be  the same.
\end{proof}

  We review the  idea of \emph{generic-case complexity} from the  paper \cite{KMSS}.
Let   $\Sigma$ be a nonempty finite alphabet  and let
$S \subseteq \Sigma^*$. The \emph{density of $S$ at } $n$, written $\rho_n (S)$, is
the number of words in $S$ of length less than or equal to $n$ divided
by the number of all words of length less than or equal to $n$.
If   limit  $\rho(S) =lim_{n \to \infty} \rho_n(S) = 1$    
we say that $S$  is \emph{generic} in $\Sigma^*$.
   A particular decision problem $D$ on words over $\Sigma^*$ is said to 
\emph{ generically computable in time} $T(n)$ if there is a partial algorithm 
$\Phi$ for $D$ which answers correctly  on an input $w$ in time $T(|w|)$  or else does not give an answer 
and such that the domain of $\Phi$ is   generic in $\Sigma^*$.

   We point out that while worst-case complexity of the word or conjugacy
problems is independent of  a given  presentation  for a finitely generated group,  
this is not the case for generic-case complexity.  To show that a decision problem
having  a certain generic-case complexity is a property \emph{of the group $G$} one  needs  to  show 
that for \emph{every} finitely generated presentation of $G$ there is a partial  algorithm working 
in the given time bound.   For the groups which we construct, the conjugacy problem is generically
 linear time  by Theorem C  of \cite{KMSS} since our  groups have infinite abelianizations containing  $\mathbb{Z}^6$.
 From the form of the relators, after abelianization the $c$ and $d$ generators disappear, so they generate a
free abelian subgroup in the abelianization, and the abelianization is  independent of presentation.

\section{The groups for the Time Hierarchy Theorem}

    We now want to mirror the Time Hierarchy Theorem.  
We discuss a few details of the proof  following the presentation in Arora and Barak \cite{AB}.
They consider Turing machines with a special input tape, some number $k \ge 1$ of work tapes
and an output tape.  Such a machine is completely determined by a complete listing 
of its transition function, which  can easily be encoded by a string of $0$'s and $1$'s 
beginning with $1$,   and thus can be regarded as a binary number. 
Many strings are not valid codes of Turing machines.
We then express   strings coding   Turing machines as unary numbers and  
$M_x$ is the Turing machine coded by a unary $x$.
Of course,  there are infinitely many numbers coding machines with exactly the same behavior.  
(Just add an arbitrary number of  non-reachable states.)

 If   $f(n) \ge n$  is   a fully time constructable function then   
 there is a language $L_f  \subseteq \{1\}^*$ with $L \in TIME(f^2 (n))$ but such that  
$ L_f \notin TIME(f(n))$. 
It is a fact that  there is a universal Turing machine $\mathcal U$ which, on  input  $x$,   
simulates $M_x$ on input $x$ in time $|x|log(|x|)$.
Now  consider the Turing machine $\widehat{M}$ which, on unary input $x$, halts and outputs $1$ if $x$
does not code a Turing machine and otherwise 
 uses the universal machine $\mathcal{U}$ to simulate $M_x$ on  input $x$ for $f(x)^{1.5}$ steps.
If $M_x$ halts and outputs $1$   then  $\widehat{M}$ halts and outputs $0$.  
Otherwise $\widehat{M}$ halts and outputs $1$.  
Let $L_f$ be the set of unary inputs on which   $\widehat{M}$ halts and outputs $1$.
Then $L_f \subseteq $ DTIME($f^2(n)$) but  $L_f \nsubseteq $ DTIME($f(n)$).    
If   $L_f$   were in  DTIME($f(n)$), there would  a Turing machine $M$ which obtains the same output 
as $\widehat{M}$   in time $f(|n|)$.  For any constant $c$, there is an $n_0$ such that 
$n^2 > cn log(n)$ for all $n > n_0$.
Since there are infinitely many Turing machines with the same behavior as $M$, let $x$ be the code 
of such a machine with $x > n_0$.  Then $M_x$ would obtain the same result  as $\widehat{M}$ 
on input $x$ in time $f(|x|)$, a contradiction.

  We again use the free group  $F = \langle a_1, \dots, a_{20},b_1, \dots, b_{20}, c_1,c_2,c_3,d_1,d_2,d_3 \rangle$.   
Now  let $t_i$  be the time 
used by $\widehat{M}$ in  deciding  if $i \in L_f$.  We will use almost the same
set of relators as before.
The defining relators for the group $H$ for $f$ will be the symmetrized closure $R$ of the 
following set $\{r_j  :  j \in L_f  \}$ of relators where we now have

\[   r_j = {a_1}^j \dots {a_{20}}^j  {c_1}^j {d_1}^{t_j} {c_2}^j {d_2}^{t_j} {c_3}^j{d_3}^{t_j}  {b_{20}}^{-j} \dots {b_1}^{-j} {d_3}^{-t_j}  {c_3}^{-j} {d_2}^{-t_j} {c_2}^{-j} {d_1}^{-t_j} {c_1}^{-j} , j \in L_f   \]

     The same considerations as before  shows that this $R$ satisfies the piece condition $C(20)$
and that  there is a quadratic time algorithm which, given $w$, calculates a weakly cyclically reduced conjugate of $w$.  
Given the defining relators $R$, it will again be immediate from small cancellation theory that 

\[  {a_1}^j \dots {a_{20}}^j  \thicksim   {b_{1}}^{j} \dots {b_{20}}^{j}  
   \iff j \in L_f. \]
 
\noindent    and  that the conjugacy problem for  $H$ is not in $DTIME(f(n)$ but is in  $DTIME(f^2(n)$.

  We have

\begin{thm}  For every fully time constructable function $f$ with $f(n) \ge n^2$, there is a
computably presented group $H$ such that  for every fixed element $u_0$ of $H$, the problem 
of deciding if an arbitrary element is conjugate to $u_0$ is decidable in quadratic time 
while  the conjugacy problem of $H$ is  decidable in time $f^2(n)$ but is not decidable in 
time $f(n)$. The conjugacy problem of $H$ has linear time generic-case complexity.
\end{thm}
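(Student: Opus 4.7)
The plan is to adapt the construction and analysis of the preceding section essentially verbatim, with the relator family $R$ now being the symmetrized closure of $\{r_j : j \in L_f\}$. First I would verify the small-cancellation and reduction machinery. The piece taxonomy is identical to that of the previous section: single full powers ${a_k}^j, {b_k}^j, {c_l}^j$ are confined to the unique relator $r_j$ because $j$ now serves simultaneously as both index and exponent; two successive blocks such as ${a_k}^j {a_{k+1}}^j$ behave as pieces because $L_f$ is infinite; and the end-triples involving ${d_l}^{t_j}$ may recur across distinct $j,j'$ sharing the same running time $t_j = t_{j'}$. Counting exactly as before shows that each $r_j$ is a product of at least $20$ maximal pieces, so $R$ satisfies $C(20)$. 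The Reduction Lemma applies verbatim, with the recognition routine now simulating $\widehat{M}$ rather than $M$ on input $j$ for $t_j$ steps; since any occurrence of a critical subword of $r_j$ inside a word of length $n$ forces $t_j \le n$, each such call is $O(n)$ and the overall reduction remains quadratic.

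Second, I would analyze conjugacy diagrams using Chapter V of \cite{LS}. By $C(20)$ together with the $9$-piece separation of critical subwords inside each $r_j$, any nontrivial annular conjugacy diagram between two weakly cyclically $R$-reduced words forces its boundary to expose a critical subword of a single relator $r_j$. Since ${a_1}^j \cdots {a_{20}}^j$ uniquely identifies $j$ and is linked inside $r_j$ to the ${b_1}^j \cdots {b_{20}}^j$ block through the distinctive conjugating segment ${c_1}^j {d_1}^{t_j}\cdots{d_3}^{t_j}$, one obtains
$${a_1}^j \cdots {a_{20}}^j \thicksim {b_{1}}^{j} \cdots {b_{20}}^{j} \iff j \in L_f,$$
and every other pair of weakly $R$-reduced words is settled either by a free-group cyclic permutation in linear time or by a finite list of checks depending only on $u_0$, so that each $\mathrm{ICP}(u_0)$ is in quadratic time.

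Finally, I would read off the hierarchy bounds and the generic-case statement. For the upper bound, the quadratic reduction plus the diagram analysis reduces the only non-trivial case on an input of length $n$ to deciding $j \in L_f$ for some $j \le n$; since $L_f \in \mathrm{DTIME}(f^2(n))$ and the hypothesis $f(n) \ge n^2$ absorbs the quadratic reduction, the conjugacy problem of $H$ lies in $\mathrm{DTIME}(f^2(n))$. For the lower bound, a $\mathrm{DTIME}(f(n))$ algorithm for conjugacy would decide $L_f$ in time $f(O(n))$ through the linear-size unary encoding $j \mapsto ({a_1}^j \cdots {a_{20}}^j,\, {b_1}^j \cdots {b_{20}}^j)$, contradicting the Time Hierarchy Theorem. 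The generic-case claim follows from Theorem C of \cite{KMSS}, since the abelianization of $H$ surjects onto $\mathbb{Z}^{6}$ (the $c_i, d_i$ classes being killed and the $a_i, b_i$ surviving freely), a property invariant under change of finite generating set. The main obstacle is the diagram-geometric step: one must verify that the $7$-piece slack built into weak cyclic $R$-reducedness is generous enough to rule out spurious conjugacies assembled from fragments of distinct relators $r_j, r_{j'}$ sharing a common $t$-value, so that the conjugacy relation of $H$ faithfully encodes $L_f$ itself rather than some larger related set.
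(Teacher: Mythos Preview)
Your proposal follows essentially the same route as the paper: verify $C(20)$ and the Reduction Lemma exactly as in the preceding section, invoke the conjugacy-diagram structure theorem from Chapter~V of \cite{LS} to isolate the single hard family ${a_1}^j\cdots{a_{20}}^j \thicksim {b_1}^j\cdots{b_{20}}^j$, and read off both the $\mathrm{DTIME}(f^2)$ upper bound and the $\mathrm{DTIME}(f)$ lower bound from the Time Hierarchy construction of $L_f$. One small slip: in the abelianization it is the $c_l,d_l$ generators that survive freely (their exponents cancel in every $r_j$), not the $a_k,b_k$; the $\mathbb{Z}^6$ needed for Theorem~C of \cite{KMSS} comes from the $c$'s and $d$'s, exactly as the paper states.
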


\section{The half-conjugacy problem}

   Our result on the half-conjugacy problem is the following.

\begin{thm} For every computable  function $f: \mathbb{N} \to \{0,1\}$ 
there is a computably presented group $P$  with solvable conjugacy problem for which    
the half-conjugacy problem  
 is solvable in quadratic time but the time complexity function of the conjugacy
problem satisfies $T(n) > f(n)$ for all $n \ge 1$. The group $P$ retains  the property that
all individual conjugacy problems are decidable in quadratic time and the conjugacy problem has
linear time generic-case complexity.
\end{thm}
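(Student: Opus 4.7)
The plan is to adapt the Section~3 construction to a binary relator family whose choice bit is determined by a diagonally defined $\{0,1\}$-valued function $g$. For each $j \ge 1$, set $u_j = a_1^j \cdots a_{20}^j$ and $v_j = b_1^j \cdots b_{20}^j$, and consider the two candidate relators
\[
r_j^0 \;=\; u_j\, C_j^0\, v_j^{-1}\, (C_j^0)^{-1},\qquad r_j^1 \;=\; u_j\, C_j^1\, v_j\, (C_j^1)^{-1},
\]
where $C_j^0$ and $C_j^1$ are conjugating words built from the $c$- and $d$-generators in the pattern of Section~3 but made syntactically distinguishable (for instance, by permuting the $c_k$ or by inserting an extra marker syllable in one of them). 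Note that $r_j^0$ forces $u_j \sim v_j$ while $r_j^1$ forces $u_j \sim v_j^{-1}$. The defining relators of $P$ are the symmetrized closure of $\{r_j^{g(j)} : j \ge 1\}$, so for every $j$ exactly one of $u_j \sim v_j$ or $u_j \sim v_j^{-1}$ holds in $P$, and the half-conjugacy problem returns YES on $(u_j,v_j)$ regardless of the value of $g(j)$.

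The function $g$ is chosen by a standard diagonalization in the spirit of the time-hierarchy construction used in the preceding section: enumerate all Turing machines and, for each $n \ge 1$, reserve one $j$ whose relator has length near $n$ on which $g$ is defined so as to disagree with some candidate machine that purports to compute $g$ in time $f$. A routine padding argument ensures that, for every input length $n \ge 1$, there is a ``diagonal'' template instance $(u_j,v_j)$ of size $n$ on which no algorithm can output $g(j)$ within $f(n)$ steps. Since $g$ is total computable, the resulting presentation of $P$ is itself computable.

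The remaining verifications parallel Section~3 almost verbatim. The relator set $R$ satisfies $C(20)$ by the same bookkeeping as before: single generator powers $a_k^j$ and adjacent pairs $a_k^j a_{k+1}^j$ are pieces (because they occur as subwords of $u_{j'}$ for every $j' \ge j$), but no three-syllable block is, so each of $u_j$ and $v_j^{\pm 1}$ contributes $10$ maximal pieces and the conjugating parts $C_j^{g(j)}$ and $(C_j^{g(j)})^{-1}$ contribute the rest. The Reduction Lemma goes through unchanged, giving a quadratic-time algorithm computing a weakly cyclically $R$-reduced conjugate. The $C(20)$ analysis of conjugacy diagrams then yields: two weakly cyclically $R$-reduced words $w_1,w_2$ are conjugate in $P$ iff either $w_1$ is a cyclic permutation (possibly of the inverse) of $w_2$ in the free group, or $(w_1,w_2)$ is, up to cyclic symmetry, of the form $(u_j,v_j)$ with $g(j)=0$ or $(u_j,v_j^{-1})$ with $g(j)=1$.

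The half-conjugacy algorithm reduces both inputs in quadratic time and then in linear time checks whether the reduced pair is a free-group cyclic conjugate (or the inverse of one) or matches the template $(u_j,v_j^{\pm 1})$ for some $j \ge 1$; in the latter case it outputs YES \emph{without} consulting $g$. The full conjugacy algorithm agrees on all pairs except the templates, where it must additionally evaluate $g(j)$; by the diagonalization this forces the worst-case time function $T(n)$ to exceed $f(n)$ for every $n \ge 1$, while solvability is preserved because $g$ is total computable. Individual conjugacy for any fixed $u_0$ is again quadratic: only the finitely many $g$-values relevant to $u_0$ need be pre-tabulated, after which conjugacy to $u_0$ is a pattern match. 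Linear-time generic-case complexity follows from Theorem~C of~\cite{KMSS}, since the abelianization of $P$ still contains a $\mathbb{Z}^6$ coming from the $c$- and $d$-generators. The delicate step is the padded diagonalization needed to upgrade the lower bound from ``infinitely often'' to ``for every $n \ge 1$''; once that flexibility to place a diagonal instance at each length is secured, everything else closely mimics Section~3.
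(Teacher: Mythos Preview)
Your approach is essentially the paper's: for each index $j$ there are two relator types, one forcing $u_j \sim v_j$ and the other $u_j \sim v_j^{-1}$, with the choice governed by a diagonally defined total computable $g$, so that half-conjugacy on the template pairs is always YES while full conjugacy requires evaluating $g$. The paper's version is marginally cleaner in two respects: it uses the \emph{same} conjugating word $C_j$ in both relator types (the two differ only in whether the $b$-block is $v_j$ or $v_j^{-1}$), which keeps the piece analysis independent of $g$, and it makes the diagonalization explicit by having $\widehat{M}$ simulate $M_i$ on input $i$ for $f(40i)$ steps, $40i$ being the combined length of the template pair $(u_i,v_i)$.
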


    For the half-conjugacy problem, now let $f$ be any  computable  function $f: {\mathbb{N}}^{+} \to \{0,1\}$.
One can construct  a  Turing machine $\widehat{M}$ which, on unary input $i$, halts and outputs $1$ 
if $i$ does not code a Turing machine and otherwise 
 uses the universal machine $\mathcal{U}$ to simulate $M_i$ on  input $i$ for $f(40i)$ steps.
If $M_i$ halts and outputs $1$   then  $\widehat{M}$ halts and outputs $0$.  
Otherwise $\widehat{M}$ halts and outputs $1$.  
The machine $\widehat{M}$ computes a total function $g$. Let $t_i$ be the time used  by $\widehat{M}$ in computing $g(i)$.
Let $L_g$ be the set of unary inputs on which  outputs $1$.
Since we have diagonalized over $f$, $L_g$ is computable in time $g$ but not in time $f$.

   Again let  $F = \langle a_1, \dots, a_{20},b_1, \dots, b_{20}, c_1,c_2,c_3,d_1, d_2,d_3 \rangle$.  
The set $R$ of defining relators will  be $\{ r_i : i \ge 1\}$ where for $ i \in L_g$,

\[   r_i = {a_1}^i \dots {a_{20}}^i {c_{1}}^i {d_{1}}^{t_i} {c_{2}}^i {d_{2}}^{t_i} {c_{3}}^{i} {d_3}^{t_i}{b_{20}}^{-i} \dots {b_1}^{-i} {d_3}^{-t_i}{c_{3}}^{-i} {d_{2}}^{-t_i} {c_{2}}^{-i} {d_{1}}^{-t_i} {c_{1}}^{-i} ,   \]
  
while  for  $ i \notin L_g$,

\[   r_i = {a_1}^i \dots {a_{20}}^i  {c_{1}}^i {d_{1}}^{t_i} {c_{2}}^i {d_{2}}^{t_i} {c_{3}}^{i} {d_3}^{t_i}{b_{1}}^{i} \dots {b_{20}}^{i} {d_3}^{-t_i}{c_{3}}^{-i} {d_{2}}^{-t_i} {c_{2}}^{-i} {d_{1}}^{-t_i} {c_{1}}^{-i} .  \]

     The set   $R$ again satisfies the piece condition $C(20)$. 
Note that the relators show that for \emph{all} $i \ge 1$ the element
${a_1}^i \dots {a_{20}}^i$ is conjugate to either ${b_1}^i \dots {b_{20}}^i$ or to its inverse,
but which possibility holds depends on whether or not $i \in L_g$.
Furthermore, any  algorithm for the conjugacy problem  decides membership in $L_g$ and so
takes as much time as $g$.

 \section{The geometry of conjugacy diagrams}

     Although various results of small cancellation theory are
often  stated for a metric small cancellation condition, 
results about the geometry of the relevant diagrams depend only
on the  appropriate piece condition.  We have seen that given
an arbitrary element $w$ we can effectively find a weakly cyclically 
$R$-reduced conjugate $u$ of $w$ in quadratic time.
What we now need  is that given two weakly cyclically $R$-reduced words $u$
and $v$ which are not equal to the identity in $G$
and which are conjugate in $G$, the conjugacy diagram $\Delta$ for $u$ and $v$
satisfies the conclusion of Theorem 5.5 of Chapter V of Lyndon-Schupp.
 (\cite{LS}, page 257.)

\begin{theorem} Fix any  group $G$ among   the groups we have constructed.
Let $u$ and $v$ be two nontrivial weakly cyclically $R$-reduced words which are
conjugate in $G$ and let $\Delta$ be a reduced conjugacy diagram for 
$u$ and $v$ with outer boundary $\sigma$ and inner boundary $\tau$.  Then
every region of $\Delta$ has edges on both $\sigma$ and $\tau$,
has at most two interior edges and has  no interior vertices.
\end{theorem}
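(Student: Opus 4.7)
The plan is to verify the three conclusions by adapting the argument of Theorem V.5.5 of Lyndon--Schupp to our setting, exploiting the large slack in the $C(20)$ condition together with the extra cushion built into the definition of weak cyclic $R$-reduction (up to $7$ missing pieces). Since $\Delta$ is an annular diagram, the Euler relation $V - E + F = 0$ holds, and the key is to combine this with careful piece counting on each region. For a region $D$, write $i(D)$ for its number of interior edges of $\partial D$; the rest of $\partial D$ is divided into maximal arcs lying on $\sigma$ or on $\tau$. Since $\Delta$ is reduced and $R$ is symmetrized, adjacent regions agree along a piece on each shared edge, so each interior edge of $\partial D$ contributes at least one piece to the boundary label of $D$; by $C(20)$ this label has piece length at least $20$, split between interior-edge pieces and the pieces read along $\sigma \cup \tau$.

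First I would rule out regions meeting only one of the two boundary components. If $D$ meets only $\sigma$, then its boundary label is of the form $r \equiv st$ where $s$ is read along a subarc of $\sigma$ (hence along a subword of $u$) and $t$ is the concatenation of the interior-edge pieces; thus $u$ contains the relator $r$ with $\|t\|_R \le i(D)$ pieces missing, and the hypothesis that $u$ is weakly cyclically $R$-reduced forces $i(D) \ge 8$. The symmetric statement holds for $\tau$ and $v$. One then observes that a region meeting neither boundary would have $i(D) \ge 20$ by $C(20)$ alone. Summing piece contributions around the annulus and feeding these lower bounds into the Euler equality eliminates all regions that fail to touch both $\sigma$ and $\tau$.

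Next I would rule out interior vertices. Reducedness forces any interior vertex to have degree at least $3$, and each such vertex must be surrounded by a cycle of regions all of whose incident edges at the vertex are interior. A piece-counting bookkeeping of the incident regions, combined once more with the annular Euler identity and the generous threshold supplied by $C(20)$, yields a contradiction. With no interior vertices, and every region touching both $\sigma$ and $\tau$, each region is a topological rectangle whose boundary cycle reads (an arc on $\sigma$)(interior edge)(an arc on $\tau$)(interior edge), giving at most two interior edges per region and completing the proof.

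The main technical obstacle is the interior-vertex analysis: in the classical $C(6)$ setting one usually invokes a companion $T(4)$ condition or a curvature calculation, but here the abundance of pieces guaranteed by $C(20)$ relative to the $7$-piece threshold of weak $R$-reduction should make the estimates go through by direct counting. The delicate point is bookkeeping at the corners where the boundary of a region transitions between arcs on $\sigma$, arcs on $\tau$, and interior edges --- one must ensure that pieces are not double-counted and that the bound ``$\ge 8$ pieces missing'' is applied to a genuine subword of $u$ or $v$ rather than to a cyclic wrap-around, so that the hypothesis on $u$ and $v$ is used legitimately.
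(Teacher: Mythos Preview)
Your proposal is correct and takes essentially the same approach as the paper: both reduce to the counting formulas underlying Theorem~V.5.5 of Lyndon--Schupp, observing that those formulas depend only on the piece condition and that weak cyclic $R$-reduction forces large interior degree on any region meeting only one boundary cycle. The paper is terser---it simply asserts that the conclusion of V.5.5 carries over because the curvature estimates are non-metric---and quotes the interior-degree bound as at least~$12$ rather than your~$8$, but either suffices for the Euler count.
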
 
 
 In short,  the theorem says that $\Delta$ ``looks like'' one of the diagrams in Figure 1.
 The essential difference between the two pictures is whether or not the inner and outer boundaries
have any vertices in common.

%%% We need to reproduce figures 5.3  from LS.
  
\begin{figure}

\scalebox{0.3}{\includegraphics{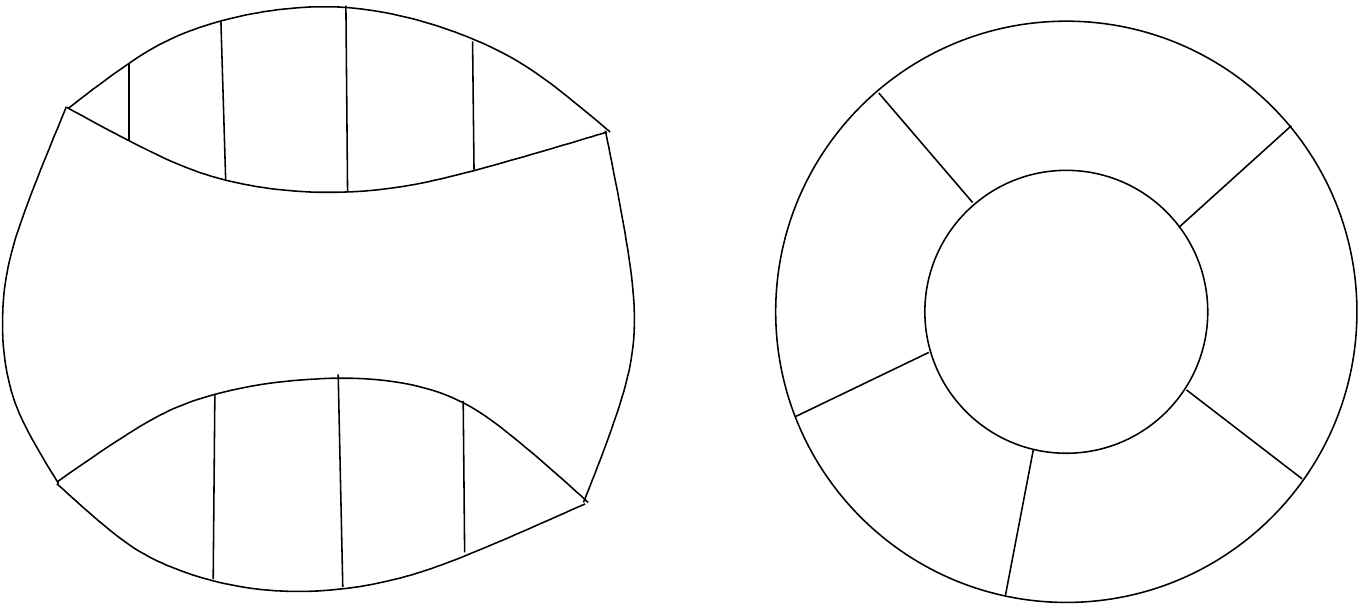}}

\caption{Possible conjugacy diagrams} \label{Fi:conjugacy}
\end{figure}

 That $u$ and $v$ are weakly cyclically $R$-reduced means that no cyclic 
permutation of either contains an element of $R$ with at most $7$ pieces
missing.  Since $R$ satisfies the condition $C(20)$, this means that
any region $D$ which intersected only $\sigma$ or $\tau$ and such
that the intersection is a consecutive part of the boundary would have 
 interior degree at least $12$, which will be  impossible by the counting formulas.
These formulas   depend only on the piece condition and thus the conclusion of 
Theorem 5.5 follows just as in the metric case.

 We give  the detailed argument for the groups $G$ with conjugacy problem of  desired c.e. degree.
  Given an arbitrary nonidentity element $u_0$ of $G$, fix a weakly cyclically reduced
 conjugate  $u$ of $u_0$.   For powers ${a_k}^l$  or ${b_p}^q$  which occur in $u$,
we need to know if $k$ or $q$ are in $A$ and, if so, what arguments of the function $f$
give those values.  We claim that this finite amount of
 information suffices  to decide conjugacy to $u$ and thus conjugacy to $u_0$.

   Given an element $v'$, we can  calculate a weakly cyclically $R$-reduced conjugate $v$ of $v'$
in quadratic time by the Reduction Lemma, Lemma  \ref{l: reduction}.
If  $v \thicksim  u$ in the free group $F$, they are conjugate in $G$. If not,
but   $v \thicksim  u$ in $G$, there is 
 a minimal conjugacy diagram $\Delta$ for $u$ and $v$ containing at least one region.
Let $\sigma$,  labelled by $u$, be the outer boundary of  $\Delta$, and let $\tau$,
labelled by $v$, be the inner boundary of    $\Delta$.  
The structure theorem shows that, since the piece length of the intersection
of the boundary of any  region $D$ with  either the outer or inner boundary of $\Delta$
is a most $12$, the intersection of the boundary of $D$ with  both  $\sigma$ and $\tau$ 
 must contain several occurrences of generators to the same power $j$.
Since we are just considering conjugacy to $u$,  the free group length $C = |u|$ of $u$ is
a constant in  this algorithm.

   In a conjugacy diagram $\Delta$ for  $u$ and $v$, say that an edge $e \in \sigma$ and an edge $f \in \tau$ are ``opposite each other''
if one of the following conditions holds:
\begin{enumerate}

\item the two edges are on both boundaries  and coincide;

\item  the edges  form the beginning of an island in that the edge preceding them is on both boundaries and $f^{-1}e$ are successive edges 
on the boundary of a region of $\Delta$;

\item $f^{-1}he$ is a successive part of the boundary of a region $D$ and $h$ is the label on an interior  edge separating $D$ 
from another region.

\end{enumerate}

 The main point is that if we choose a letter $y$ in $u$   and a letter $z$ in $v$ and suppose that they are the 
labels on edges which are opposite   each other in $\Delta$, then is  \emph{only one way} to fill in the rest of the conjugacy diagram 
and we can calculate whether or not a valid conjugacy diagram with this initial condition exists in linear time.
If we succeed in constructing a valid conjugacy diagram for one of the $3C|v|$ possible initial conditions, then
$u$ and $v$ are conjugate, and if we cannot construct a conjugacy diagram then they are not conjugate.

 It is also clear from the structure of conjugacy diagrams that

\[  {a_1}^j \dots {a_{20}}^j   \thicksim  {b_{1}}^{j} \dots {b_{20}}^{j}  
   \iff  j \in A. \]

so a solution to the conjugacy problem for $G$ decides membership in $A$.
On the other hand, given an oracle for the set $A$ we can calculate weakly cyclically $R$-reduced conjugates for any pair 
of elements and then apply the method above to  decide  conjugacy and  the conjugacy problem for $G$ is Turing equivalent
to deciding membership in $A$.

   For the  groups $H$ for the Time Hierarchy  Theorem it is clear that

\[  {a_1}^i \dots {a_{20}}^i  \thicksim  {b_{1}}^{i} \dots {b_{20}}^{i}  
   \iff i \in L_f \]

so the conjugacy problem cannot be calculated in time $f$.

   The conjugating part of the defining relators has been chosen so that the only way that $a_{20}{c_1}^id_1$  or $d_1{c_2}^id_2$
 or $d_2{c_2}^id_3$ or their inverses can be subwords of  the label on an interior edge in a reduced diagram is if 
they are matched against the corresponding generators of the other conjugating part of the same relator.  And since the boundary
labels of a conjugacy diagram are freely reduced, the entire conjugating parts must then be exactly matched and appear as the label
on the interior edge.  If all interior edges are so labelled, the diagram shows that a power of some  ${a_1}^i \dots {a_{20}}^i$
is conjugacy to the same  power of    ${b_{1}}^{i} \dots {b_{20}}^{i} $.   

   If it is the case that for some region of the conjugacy diagram a   conjugating part of the relator is not completely
matched against its inverse, then  some  ${c_l}^i{d_l}^{t_i}$ occurs  on one of the boundaries and the padding allows us
to calculate in linear time if this is indeed a correct part of a relator.  In this case we can again see if one can construct 
a valid conjugacy diagram in quadratic time. 
So  the algorithm in time $f^2(n)$ for membership in $L_f$ solves the conjugacy problem for $H$.

   The remarks for the groups $H$ apply exactly to the groups $P$ for the half-conjugacy problem. The only hard case is
when a power of $ {a_1}^i \dots {a_{20}}^i$ is  conjugate to the corresponding power of $ {b_{1}}^{i} \dots {b_{20}}^{i}$
or to its inverse.  The relators force one of the two possibilities to hold but deciding which one requires deciding membership
in $L_g$.  So  the conjugacy problem
for these groups is solvable in time $g$ but not in time $f$.

\section{A group with $\mathcal{NP}$-complete conjugacy problem}

   We now want to construct a computably presented group $G$ with $\NP$-complete conjugacy
problem while keeping the constraint that all individual conjugacy problems are decidable
in quadratic time and that the conjugacy problem for the given presentation
is strongly generically quadratic time.   
The previous results on imitating the Time-Hierarchy Theorem and
the half-conjugacy problem depended on using the  free group unary notation since 
this notation gives enough padding in the conjugating parts of the relators to check the
correctness of relators.
Of course, elements of a free group also have a unique \emph{normal form with exponents},
where we write powers of generators as the name of the generator with a decimal exponent.
A \emph{syllable} is such a power of a generator and we require that adjacent syllables are
powers of distinct generators.  For example, one element of the free group $\langle a,b,c \rangle$\
is $w = a^{25} b^{-17}a^{-33}c^{3}$.   The \emph{length} of a normal form with exponents is
 the total number of symbols in the normal form. Thus $|w| = 13$ for the  example just given.
Basic decision problems in free groups mainly retain their
polynomial-time decidability in this notation.  For example, Gurevich and Schupp \cite{GS} show
that the uniform membership problem for finitely generated subgroups of a free group remains in polynomial 
time when  elements are written in exponent normal form.  We need to use exponent normal form 
in order to have a  coding of the satisfiability problem for Boolean expressions where the coded  
length is proportional to the length of the standard coding of such expressions.

   The problem  \emph{3-SAT} is  the satisfiability problem for 
Boolean expressions which are  conjunctions of clauses, each of which contains exactly
three  literals.  A literal is the symbol $x$ with positive decimal subscript, representing 
a variable, or its negation.  For example, the expression 
$$( x_1 \lor x_3 \lor \lnot x_7)  \land ( \lnot x_4 \lor x_7 \lor x_{11})  \land (x_1 \lor  x_7 \lor \lnot x_9)  \land ( \lnot x_3 \lor  x_4 \lor  x_9)$$
is an instance of $3$-SAT.  A basic result of complexity theory is  that $3$-SAT is $\mathcal{NP}$-complete. 
Variables may be repeated in a clause but we assume that a clause  does not both a variable and
its negation.

   We will represent the clause $(x_{i} \lor x_j \lor x_k)$ as $a^i b^j c^k$ with the exponent negative 
if the variable is negated.  We represent a conjunction of clauses by the concatenation of the 
representatives of the clauses.
Thus we represent the example of $3$-SAT given above by
$$ a^1 b^3 c^{-7} a^{-4} b^7 c^{11} a^1 b^{7} c^{-9}  a^{-3} b^4 c^9 $$    
So  the length of our basic coding is even shorter than the standard coding of instances of $3$-SAT.

  In order to represent $3$-SAT we need to code all its instances.  To do this we put a ``short-lex''
 well-ordering on $\mathbb{Z}^3$ as follows.  The \emph{index} of a triple $(z_1,z_2, z_3)$ is 
$|z_1| + |z_2| + |z_3|$, the sum of the absolute values of the $z_i$.  We order triples first by index, 
and within the same index  lexicographically but with positive values preceding negative ones.  
Note that we  consider  only triples  which do not  contain $0$ since subscripts of variables are positive.

We define an enumeration $\mathcal{E} = \{ \eta_i \}$ of all instances of $3$-SAT as follows.  We enumerate all pairs $(m,n)$
of positive integers in the usual way.  When a pair $(m,n)$ is enumerated, we then enumerate all
instances of $3$-SAT where there are at most $m$ clauses and the maximum index of any clause is at most $n$
and the instance \emph{has not previously been enumerated}. The clauses in an instance are concatenated
in the short-lex order defined above and we order instances lexicograpically.

 A language $L \in \NP$ is  characterized by the fact that for every instance which is in $L$,
there is a short \emph{certificate},  given which  one can verify in polynomial time that the
instance is indeed in $L$.  For $3$-SAT this certificate is an assignment of truth values showing that
the instance is actually  satisfiable.  For an instance of $3$-SAT which is satisfiable we choose the
the  first satisfying assignment in the usual truth-table order.

  We code this satisfying assignment using generators $x,y,z$ in the following way. 
For our running example given above, the first line of the truth-table assigning  all variables
the value false satisfies the instance.  So we code this assignment as
$$ x^{-1} y^{-3} z^{-7}  x^{-4} y^{-7} z^{-11} x^{-1} y^{-7} z^{-9} x^{-3} y^{-4} z^{-9}$$
 
  The absolute values of the exponents again represent the variables and the sign of
the exponent is negative if the variable is assigned the value false and positive if the
variable is assigned the value. 
  
  The defining relators we use will be words in exponential normal form in the free group $F$ on generators
$$ a_1,b_1,c_1,....,a_{20}, b_{20},c_{20}, d_1,e_1,f_1,..., d_{20},e_{20}, f_{20}, u_1, v_1, w_1,..., u_3,v_3,w_3 ,x_1, y_1,z_1, x_2, 
y_2,z_2  $$

  If  $\eta_j$ is   a satisfiable instance in the enumeration $\mathcal{E}$ defined above, 
let $\alpha_{\eta_j, l}$ denote the coding of this instance on the generators $a_l,b_l,c_l$ for $1 \le l \le 20$
and  let $\beta_{\eta_j,l}$ denote the coding of this instance on the generators $d_l,e_l,f_j$ 
for $1 \le l \le 21$.  Let $\gamma_{\eta_j,l}$ represent the coding of  this instance 
on the generators $u_l, v_l,w_l$ for $1  \le l \le 3$.  Finally, let $\delta_{\eta_j,l}$ represent the
coding of the satisfying truth assignment for this instance on the generators $x_l, y_l,z_l,  l = 1,2$.

  The basic defining relators for our group $G$ is the set $R = \{r_{\eta_j}\}$ where $r_{\eta_j}$ is
$$ \alpha_{\eta_j,1}...\alpha_{\eta_j,20} (\gamma_{\eta_j,1} \delta_{\eta_j,1}\gamma_{\eta_j,2} \delta_{\eta_j,2}\gamma_{\eta_j,3})
 {\beta_{\eta_j,20}}^{-1}...{\beta_{\eta_j,1}}^{-1}  ({\gamma_{\eta_j,3}}^{-1}  {\delta_{\eta_j,2}}^{-1} {\gamma_{\eta_j,2}}^{-1}  {\delta_{\eta_j,2}}^{-1} {\gamma_{\eta_j,1}}^{-1})  $$  
   
 and where $\eta_j$ ranges over all \emph{satisfiable} instances in the enumeration $\mathcal{E}$.
Note that although the Greek letters now represent long words on the given generators, taking them to correspond
to the Roman letters of the previous groups shows that the general form of the defining relators is essentially the same  as before.  
 
    We  now  need to use small cancellation theory over free products, which is essentially like small
cancellation theory over free groups.   For technical details we again refer to  Lyndon-Schupp\cite{LS},
but we review some basic definitions.   
A free group $F$ with a specified free basis  can be viewed as the free product of the infinite cyclic groups generated by 
the specified generators. The free product normal form is given by the normal form with exponents but  
the \emph{free product length} $|u|$ of a normal form  is just the number of syllables in $u$.
We now view $F$ as this free product.

     If $ u = ys_1$ and $v = s_2 z$ are free product normal forms with the last syllable of $u$ and the first syllable of 
$v$  in the same factor, then there is \emph{cancellation} in the product $uv$ if $s_2 = {s_1}^{-1}$
and \emph{consolidation} in the product $uv$ if   $s_2 \ne {s_1}^{-1}$.  An element $u = y_1...y_n$ is
\emph{weakly cyclically reduced} if $|u| \le 1$ or   $y_n \ne {y_1}^{-1}$.  A set $R \subset F$ is
\emph{symmetrized} if every element of $R$ is weakly cyclically reduced and if $r \in R$ then every weakly cyclically
reduced conjugate of $r$ and $r^{-1}$ is also in $R$.

    An element $w$ has \emph{semi-reduced form} $uv$ if there is no cancellation in the product $uv$.
Note that consolidation is allowed. An element $p$ is a \emph{piece relative to R} if
$R$ contains \emph{distinct} elements $r_1$ and $r_2$ with semi-reduced forms $r_1 = py_1$ and  $r_2 = p^{-1}y_2$.
The metric small cancellation condition is now 

\emph{Condition} $C'(\lambda)$: If $r \in R$ has semi-reduced form $r = py$ where $p$ is a piece, then
$|p| < \lambda |r|$.  Also, every element of $R$ has length greater than $1/\lambda$.

   The set $R$ of defining relators we now use is the symmetrized closure of the set $R$ defined above,
that is,  all weakly cyclically reduced conjugates of elements of $R$ and their inverses.
Since there can be only  one basic relator corresponding to a particular instance of $3$-SAT,
it is easy to see that the maximum length of a piece relative to $R$ is at most  the length of
the conjugating part of the relator.  
Thus our set $R$ of relators satisfies $C'(1/9)$.  

   Given this small cancellation condition the geometry of conjugacy diagrams is the same as
in the case of quotients of free groups (not viewed as free products).  
Thus  in the quotient group $F/N$ where $N$ is the normal closure of $R$ we have

$$ \alpha_{\eta_j,1} ... \alpha_{\eta_j,20}      \thicksim    \beta_{\eta_j,1} ... \beta_{\eta_j,20}  $$

if and only if the instance $\eta_j$ is satisfiable. 
 It follows exactly as in our previous discussion 
 that powers of variants of the above are the only hard instances.
Thus each individual conjugacy problem is decidable in quadratic time and the conjugacy problem as
a whole is  generically linear  time. Thus  the conjugacy problem of $G$ is $\NP$-complete
and the  other requirements are met.

\section{Groups with algorithmically finite conjugation}

\medskip \noindent
  We have  shown that  one can bound the complexity of all individual conjugacy problems while
making the global conjugacy problem arbitrarily complex.  In Miller's famous examples of residually finite, 
finitely presented groups $G$ with undecidable conjugacy problem, there is  always  some element $q$ for which $ICP(q)$ 
is undecidable (Lemma 4 on page 27 in \cite{Miller}).  
However,  \cite{BMR3} shows that     the  individual  conjugacy problems
 $ICP(w)$  in Miller's group are  solvable in polynomial time for for all $w$ from an exponentially generic subset of $G$. 
This leads one to ask about the opposite phenomenon. 
 
 \begin{question}
 Are there  recursively presented groups $G$ with solvable word problem such that if the individual conjugacy problems are
  decidable on a computably enumerable subset $Y \subseteq G$ then $Y$ is negligible, or indeed exponentially negligible?
 \end{question}

Although  Theorem \ref{th:Turing} shows that there is no general  effective  way  to build a 
uniform decision algorithm for the  conjugacy problem from solutions of the individual conjugacy problems,
 the following general lemma holds.

\begin{lemma}  Let $G = \langle X; R \rangle$  be a recursively presented group. If $W = \{ w_1,...,w_n\}$ is a finite set of
pairwise nonconjugate elements of $G$ then there is a partial algorithm $\Phi$ which decides the conjugacy problem
on the union  $Z = \bigcup_{i =1,...,n} {w_i}^G$ of the conjugacy classes of the $w_i \in W$. 
\end{lemma}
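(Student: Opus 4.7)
The plan is to exploit the fact that in a recursively presented group $G = \langle X; R \rangle$, the conjugacy class $w^G$ of any fixed element $w$ is a c.e.\ subset of the free group $F(X)$. This holds because equality in $G$ is c.e.\ (the set $\{(u,v) \in F(X) \times F(X) : u =_G v\}$ is enumerated by deriving consequences of $R$), and consequently
$w^G = \bigcup_{g \in F(X)} \{u \in F(X) : u =_G g^{-1} w g\}$
is a c.e.\ union of c.e.\ sets.

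First I fix, for each $i = 1, \dots, n$, a computable enumeration $E_i$ of the class $w_i^G$, producing stagewise finite approximations $E_i^{(s)} \subseteq w_i^G$ whose union over $s$ equals $w_i^G$. The partial algorithm $\Phi$ on input $(u,v)$ then simulates all $n$ enumerations in parallel; at each stage $s$ it checks whether $u \in E_i^{(s)}$ for some $i$ and whether $v \in E_j^{(s)}$ for some $j$. As soon as both witnesses have appeared, $\Phi$ outputs ``conjugate'' if $i = j$ and ``not conjugate'' if $i \ne j$, and halts.

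For correctness, suppose $u, v \in Z$. Because the $w_i$ are pairwise nonconjugate, the classes $w_i^G$ are pairwise disjoint as sets of group elements, so $u$ belongs to $w_i^G$ for a unique $i$ and $v$ belongs to $w_j^G$ for a unique $j$. Each is therefore eventually listed by its enumeration, so $\Phi$ halts on $(u,v)$. The indices $i, j$ it returns are unambiguous, and $u$ and $v$ are conjugate in $G$ if and only if they lie in the same class, i.e.\ $i = j$. On inputs outside $Z$ the algorithm $\Phi$ may simply fail to halt, which is exactly what the definition of a partial algorithm permits.

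There is no substantive obstacle to overcome here; the routine character of the argument is itself the point. What the lemma really records is that the dependence on the specific finite list $W$ is essential: extending the method to an infinite uniformly c.e.\ family of pairwise nonconjugate test elements would require a uniform way of recognising failure to lie in any of the advertised classes, and Theorem~\ref{th:Turing} shows that no algorithm can in general supply this.
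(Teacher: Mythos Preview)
Your proof is correct and follows essentially the same approach as the paper: enumerate in parallel the (c.e.) conjugacy classes $w_i^G$, wait until both $u$ and $v$ appear, and compare the indices. The paper's version is terser but the underlying idea is identical.
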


\begin{proof}
The partial algorithm $\Phi$ works as follows.  Since $G$ is recursively presented, when given elements $u,v \in G$, we can
 begin enumerating in parallel all words equal in $G$ to conjugates of the $w_i \in W$.
If  $u$ and $v$ are in $Z$, they will both eventually  be enumerated in this process.  If they are enumerated  
as conjugates of the same $w_i$ they represent conjugate elements of $G$.  If they are enumerated as conjugates
of distinct $w_i$ and $w_j$ they are not conjugate in $G$.
\end{proof}

\begin{corollary} A recursively presented group with only finitely many conjugacy classes has solvable conjugacy problem.
\end{corollary}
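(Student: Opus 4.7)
The plan is immediate: apply the preceding lemma to a complete set of conjugacy class representatives. Since $G$ has only finitely many conjugacy classes, there exist words $w_1,\ldots,w_n$ in the generators, one from each class, that are pairwise nonconjugate and exhaust $G$ under conjugation. These representatives exist by hypothesis and can be hardwired into the algorithm we describe; we do not need an effective procedure to find them, only their existence as a finite list of specific elements of $G$.

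With this choice $W = \{w_1,\ldots,w_n\}$, the union $Z = \bigcup_{i=1}^n w_i^G$ of conjugacy classes is all of $G$. The lemma then supplies a partial algorithm $\Phi$ that correctly decides conjugacy on $Z \times Z = G \times G$. Since $G$ is recursively presented and every element of $G$ lies in some $w_i^G$, the parallel enumeration of conjugates of the $w_i$ used in the lemma's proof must eventually list every input pair $(u,v)$, so $\Phi$ in fact halts on all inputs: it returns \emph{yes} when $u$ and $v$ appear as conjugates of the same $w_i$, and \emph{no} when they appear as conjugates of distinct $w_i$ and $w_j$ (which will happen in one of the two ways, since the $w_i^G$ partition $G$).

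There is essentially no obstacle here; the corollary is a direct specialization of the lemma, trading the partial algorithm for a total one precisely because the covered subset is all of $G$. The one conceptual point worth flagging is that effectiveness of the choice of representatives is irrelevant — an algorithm that refers to a fixed finite list of words exists as soon as such a list exists, so the purely existential hypothesis of finitely many conjugacy classes is enough to conclude decidability of the conjugacy problem.
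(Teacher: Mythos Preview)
Your proof is correct and matches the paper's approach exactly: the paper states the corollary without a separate proof, treating it as an immediate consequence of the preceding lemma by taking $W$ to be a full set of conjugacy class representatives so that $Z=G$. Your observation that the representatives need only exist, not be effectively found, is the right way to handle the one subtlety here.
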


    We say that a recursively presented group $G$   has {\em algorithmically finite conjugation}  if $G$ has infinitely
many conjugacy classes and every infinite c.e. 
set of elements of $G$ must contain two elements which are conjugate.  We note that this condition implies that if $Y$ 
is a c.e. set  of elements of $G$ for which there is a partial algorithm $\Phi$ solving the conjugacy problem for 
elements of $Y$ then  $Y$ must have elements from only   finitely many conjugacy classes  and we are in the situation 
of the above lemma.  
Given a partial algorithm $\Phi$ deciding conjugacy for a set  $Y$ containing infinitely many pairwise non-conjugate elements, 
 we could  computably  enumerate an infinite set $S$ of pairwise non-conjugate elements of $G$ as follows.  Let $s_0$ be the
first element in the enumeration of $Y$. Now use $\Phi$ and the enumeration  $Y$ until we find an element $s_1$ not conjugate
to $s_0$.  Continue this process, finding at the $n$-th stage, an element $s_n$ not conjugate to any of 
$s_0,...,s_{n-1}$.  

  Thus the conjugacy problem is as bad as possible  in a group with algorithmically finite conjugation.

     Recall that a group $G$ generated by a finite set $X$ is termed {\em algorithmically finite} \cite{MO, KhoussM} 
if every infinite computably enumerable subset of $F(X)$ has two distinct words which define equal elements in $G$. 
In other words one can computably enumerate only a finite set of words in $F(X)$ which define pair-wise distinct elements of $G$. 
Infinite, recursively presented, algorithmically
finite groups are also called Dehn Monsters and have been shown to exist \cite{MO}. Indeed, there are even residually finite  
Dehn Monsters\cite{KhoussM,Klyachko}.
We next observe that any Dehn Monster has algorithmically finite conjugacy.

\begin{theorem} \label{th:monsters}
Let $G$ be an infinite, recursively presented, algorithmically finite  group generated by a finite set $X$.  Then:
\begin{itemize}
\item [1)] $G$ has infinitely many conjugacy classes;
\item  [2)] $G$ has algorithmically finite conjugation;  
 \end{itemize}
\end{theorem}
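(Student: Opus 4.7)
The plan is to derive both conclusions from algorithmic finiteness of $G$ applied to suitable c.e. sets, relying on the preceding Corollary for part~(1) and on the trivial implication ``pairwise non-conjugate $\Rightarrow$ pairwise $G$-distinct'' for part~(2).

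For part~(1) I would argue by contrapositive. Assume $G$ has only finitely many conjugacy classes. The preceding Corollary then yields a decision algorithm for the conjugacy problem of $G$. Since $w =_G 1$ iff $w$ is $G$-conjugate to the identity, a conjugacy algorithm immediately decides the word problem of $G$. With a word-problem algorithm in hand and $G$ infinite, I would enumerate $F(X)$ in a fixed computable order and keep, at each step, the next word if and only if it is not $=_G$-equal to any previously kept word. Because $G$ is infinite this growing list is unbounded, producing an infinite c.e. subset of $F(X)$ consisting of pairwise $G$-distinct words, directly contradicting the algorithmic finiteness of $G$. Hence $G$ must have infinitely many conjugacy classes.

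For part~(2), let $S \subseteq F(X)$ be any c.e. set whose words represent pairwise non-conjugate elements of $G$; I claim $S$ is finite. Indeed, two words whose images in $G$ are non-conjugate are a fortiori $G$-distinct, since equal elements are trivially conjugate to each other. Consequently $S$ is a c.e. set of pairwise $G$-distinct words, and the algorithmic finiteness of $G$ forces $S$ to be finite. Equivalently, every infinite c.e. set of elements of $G$ must contain two (distinct) elements that are conjugate in $G$; combined with part~(1), $G$ has algorithmically finite conjugation.

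The only nontrivial step is part~(1), where one must chain the Corollary (from finiteness of conjugacy classes to solvability of the conjugacy problem) with the classical reduction of the word problem to the conjugacy problem, and only then invoke algorithmic finiteness; part~(2) is essentially a definitional unfolding once one observes that non-conjugacy is strictly stronger than distinctness in $G$.
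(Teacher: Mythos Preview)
Your proof is correct and follows essentially the same route as the paper. For part~(1) the paper simply asserts that an infinite algorithmically finite group has unsolvable word problem and then applies the Corollary contrapositively; you unfold that assertion by showing explicitly how a word-problem algorithm would yield an infinite c.e. set of pairwise $G$-distinct words, which is exactly the standard justification. Part~(2) is argued identically in both.
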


\begin{proof}
Since $G = \langle X;R \rangle$ has unsolvable word problem it has unsolvable
conjugacy problem and hence must have infinitely  many conjugacy classes by 
the above corollary.  That $G$ has algorithmically finite conjugacy is immediate since
any infinite  c.e. set must have two distinct words which are equal in $G$ and
thus certainly conjugate.
\end{proof}

    In Dehn Monsters  one can  solve the conjugacy problem only  on finite unions of conjugacy classes, but there is still
a question about  the asymptotic density of  single conjugacy classes.  There are non-amenable finitely generated groups 
with finitely many conjugacy classes \cite{Osin}, so not all conjugacy  classes in such groups are negligible. Also,
the question  of whether or not there are finitely generated, residually finite, non-amenable groups with 
only finitely many conjugacy classes seems to be open.

\bigskip

\end{document}